\documentclass[12pt]{amsart}
\usepackage[margin=2.5cm]{geometry}
\usepackage[colorlinks,citecolor=red,urlcolor=blue,bookmarks=false,hypertexnames=true,backref]{hyperref} 
\usepackage{amssymb}
\usepackage{bbm,accents}  

\newtheorem{thm}{Theorem}[section]
\newtheorem{lem}[thm]{Lemma}

\theoremstyle{definition}

\newcommand{\C}{\mathbb{C}}
\newcommand{\N}{\mathbb{N}}
\newcommand{\R}{\mathbb{R}}

\newcommand{\Hconv}{\ast_{(1)}}
\newcommand{\Vconv}{\ast_{(2)}}

\renewcommand{\Im}{\operatorname{Im}}

\newcommand{\Hn}{\mathbb{H}^\cdim}

\newcommand{\wid}{\mathrm{w}}
\newcommand{\heit}{\mathrm{h}}
\newcommand{\cent}{\mathrm{c}}
\newcommand{\one}{{(1)}}
\newcommand{\two}{{(2)}}
\newcommand{\cdim}{n}
\newcommand{\hdim}{Q}
\newcommand{\rinabla}{\accentset{\leftharpoonup}\nabla}

\newcommand{\supp}{\operatorname{supp}}

\newcommand{\oper}[1]{\mathcal{#1}}

\newcommand{\norm}[1]{\left\| #1 \right\|}

\title{Flag Hardy spaces and partial differential equations}
\author{Michael G Cowling}
\thanks{The first-named author thanks the conference organisers for their invitation to speak at the Conference in Honour of Jill Pipher.  
Both authors were supported by ARC Discovery Grant DP220100285.
Both authors also express their appreciation to Sun Yat-Sen University, Guangzhou, PRC, whose hospitality enabled much of the work discussed here.}
\address{School of Mathematics and Statistics\\ University of New South Wales\\ UNSW Sydney NSW 2052\\ Australia}

\author{Ji Li}
\address{Department of Mathematics\\ Macquarie University\\ Macquarie Park NSW 2113 \\ Australia}

\begin{document}

\maketitle

\section{Introduction}

This note arises from a talk by the first-named author given at the Conference in Honour of Jill Pipher, held at Macquarie University (Sydney, Australia), in June 2024. 
We offer again our congratulations to Jill on her remarkable career.

Much of the mathematics here is taken in part from, or is a result of reflection on, the preprint \cite{CCLLO24}, and is therefore at least in part due to our  co\-authors Peng Chen, Ming-Yi Lee and Alessandro Ottazzi.

After a quick introduction to flag geometry on the Heisenberg group, we discuss the problem of identifying flag atoms, which boils down to a question in PDE.
We establish a conjecture of E.M.~Stein for the flag Hardy space on the Heisenberg group.
The key result on which our arguments hinge is due to Baldi, Franchi and Pansu \cite{BFP22, BFP24}.

\section{Flag geometry on the Heisenberg group}

We are interested in the (noncommutative) Heisenberg group $\Hn$, which we identify with 
$\C^{\cdim} \times \R$.
The group multiplication is given by
\[
(w,s) (z,t) = (w+z,s+t + \tfrac{1}{2}\Im(w \cdot \bar z))
\]
for all $(w,s), (z,t) \in \Hn$.
The Heisenberg group admits dilations $\delta_r: (z,t) \mapsto ( rz, r^2 t)$ that are group automorphisms, and its homogeneous dimension $\hdim$ is equal to $2\cdim+2$, that is, $| \delta_r E| = r^\hdim |E|$, where $|E|$ is the measure of a measurable subset $E$ of $\Hn$.
When the details of the multiplication are not important, we may abbreviate elements of $\Hn$ to $g$.
Thus, convolution on $\Hn$ is given by
\[
f \ast_{\one} f' (g') 
= \int_{\Hn} f(g) f'(g^{-1} g') \,dg
= \int_{\Hn} f(g'g) f'(g^{-1}) \,dg
\qquad \forall g' \in \Hn.
\]
If $f$ lives on the centre $G_2 := \{0\} \times \R$ of $\Hn$, we integrate over $G_2$ only:
\[
\begin{aligned}
f \ast_{^\two} f' (g') = \int_{G_2} f(g_2) f'(g_2^{-1} g') \,dg_2
\qquad \forall g' \in \Hn ,\\
f' \ast_{^\two} f (g') = \int_{G_2} f'( g'g_2) f(g_2^{-1}) \,dg_2
\qquad \forall g' \in \Hn.
\end{aligned}
\]

The Lie algebra of left-invariant vector fields on $\Hn$ is spanned by the fields
\[
\mathcal{X}_j = \partial_{x_j} - \tfrac{1}{2} y_{j} \partial_{t},
\qquad
\mathcal{Y}_j = \partial_{y_j} + \tfrac{1}{2} x_{j} \partial_{t},
\qquad\text{and}\quad
\mathcal{T} = \partial_t;
\]
the nontrivial commutators of these are determined by the relations $[ \mathcal{X}_j, \mathcal{Y}_j] =
\mathcal{T}$ for all $j$.
The fields $\mathcal{X}_j$ and $\mathcal{Y}_j$ are called horizontal, while $\mathcal{T}$ is called vertical.
The horizontal (or subriemannian) gradient is then
\[
\nabla_{(1)} = (\oper{X}_1, \dots, \oper{X}_n, \oper{Y}_1, \dots, \oper{Y}_n) ,
\]
and the associated sublaplacian $\Delta_{(1)}$ is defined to be $ - \sum_{j} \left(\mathcal{X}_j^2 + \mathcal{Y}_j^2\right)$ (the sign makes it a positive operator).
To make the notation consistent, we also write
\[
\nabla_{(2)} = \oper{T} 
\qquad\text{and}\qquad
\Delta_{(2)} = -\oper{T} ^2 .
\]
The vertical vector field $\mathcal{T} = \partial_t$ is a commutator of horizontal vector fields, so two horizontal derivatives control one vertical derivative.

Much of what we write may be extended to more general stratified groups, but more complicated notation is needed.

The basic geometrical objects on the Heisenberg group $\Hn$ are the Korányi balls
\[
B^{(1)}(o,r) := \bigl\{ (z,t) \in \C^{n} \times \R : ( |z|^4 + t^2 )^{1/4} < r \bigr\}  ,
\]
and the basic objects in the centre are intervals
\[
B^{(2)}(0,s) := \bigl\{ (0,t)  \in \C \times \R : |t| < s \bigr\} .
\]
The basic objects in flag geometry are ``tubes'' $T$, or $T(g ,r,s)$, which are sets of the form
\[
g B^{(1)}(0,r) B^{(2)}(0,s) .
\]
The tube $T$ has \emph{width} $\wid(T)$ equal to $r$, \emph{height} $\heit(T)$ equal to $r^2 +s$, and centre $\cent(T)$ equal to $g$.
We write $2T$ for the tube 
\[
g B^{(1)}(0,2r) B^{(2)}(0,4s), 
\]
which is a dilated version of $T$.
If $s < r^2$, then $ T(g, r,0) \subset T(g ,r,s) \subset T(g, \sqrt{2}r,0)$, and these three tubes are comparable in size.  
We normally assume that $s \geq r^2$.

In \cite{CCLLO24}, we also use objects that we call shards, which are fractal versions of dyadic rectangles, and have better disjointness and nestedness properties than tubes.
These properties are useful for proving an appropriate version of Journ\'e's lemma (see \cite{Jou86, Pip86}).
The existence of shards depends on the existence of discrete cocompact subgroups in $\Hn$, which do not exist in general stratified groups, and we do not use them here.

\section
{The atomic Hardy space}
Let $T$ be a tube in $\Hn$.
We say that a function $a_T$ is a \emph{particle associated to $T$} if $a_T$ is supported in $2T$ and  certain \emph{cancellation conditions} hold; we define and discuss these below.
Let $U$ be an open set of finite measure $|U|$ in $\Hn$, and $\mathfrak{T}(U)$ be the set of tubes $T$ contained in $U$.
We say that a function $a$ is an \emph{atom associated to $U$} if  $a = \sum_{T \in \mathfrak{T}(U)} a_T$, where, for all choices of sign functions $\sigma: \mathfrak{T}(U) \to \{ \pm 1\}$,
\[
\biggl\| \sum_{T \in \mathfrak{T}(U)} \sigma(T) a_T \biggr\|_{L^2(\Hn)}  \leq |U|^{-1/2}.
\]
This condition implies the usual square condition that is imposed on particles, but in the general context of stratified groups, where analogues of dyadic rectangles do not exist, it is still utilisable, while the square function condition relies on having some kind of disjointness or nestedness.

In particular, a particle $a_T$ is an atom if and only if
\begin{equation}\label{eq:size-condition}
|T|^{1/2} \norm{a_T}_{L^2(\Hn)} \leq 1.
\end{equation}
We say that a function $f$ belongs to the \emph{flag Hardy space $H_F^1(\Hn)$} if and only if we may write
$f = \sum_{j \in \N} \lambda_j a_j$,
where all $a_j$ are atoms and all $\lambda_j$ are in $\C$, and $\sum_{j \in \N} |\lambda_j| < \infty$.
The atomic norm of $f$ is the infimum of all sums $\sum_{j \in \N} |\lambda_j|$ over all such representations of $f$.

The main theorem of \cite{CCLLO24} shows that the Hardy space $H^1_F(\Hn)$ may be characterised in various ways, in particular, by
\begin{itemize}
  \item boundedness of certain singular integrals (in particular, double Riesz transforms);
  \item boundedness of certain maximal functions (in particular, radial and nontangential versions);
  \item boundedness of certain square functions (with discrete or continuous parameters);
  \item boundedness of certain area functions.
\end{itemize}

All these characterisations involve showing that the boundedness of some operator $\oper{A}$ implies the existence of an atomic decomposition of the function (which is arguably the nontrivial part of the result) and showing that the same operator $\oper{A}$ is bounded from $H_F^1(\Hn)$ to $L^1(\Hn)$; this involves working with particles $a_T$ and estimating $\oper{A} a_T$.
The estimates for the particles are put together to obtain estimates for atoms using the flag version of Journé's lemma.  

\section{cancellation}\label{sec:moments}
At a conference in honour of Guido Weiss in 1990, Elias M. Stein discussed various problems connected with Hardy spaces, and suggested that the particles for the flag Hardy space might be the convolution product $a^{\one} *_{\two} a^{\two}$ of an atom $a^\one$ for the Folland--Stein--Christ--Geller Hardy space $H^1(\Hn)$ \cite{FS82, CG84} and an atom $a^\two$ for the classical Hardy space $H^1(\R)$ on $\R$ \cite{CW77}.
There atoms are characterised by size conditions, support conditions and cancellation conditions.

More precisely, take $a^\one$ supported in $B^\one(g,r)$ of mean $0$, 
and $a^\two$ supported in $B^\two(0,h)$ of mean $0$, 
such that 
\[
\norm{ a^\one }_{L^2(\Hn)} \leq |B^\one(g,r)|^{-1/2} 
\qquad\text{and}\qquad
\norm{ a^\two }_{L^2(\R)} \leq |B^\two(0,h)|^{-1/2};
\] 
then $a := a^\one *_{(2)} a^\two$ should be a normalised particle associated to $T := T(g,r,h)$.

We investigate and prove a version of this conjecture.
Deciding whether a function $a$ supported in a tube $T$ admits a factorisation as a convolution product  $a^{\one} *_{\two} a^{\two}$ seems to be hard, so we need to look for alternative descriptions.

Since $a_2$ has mean $0$, we may write $a_2$ as a derivative:  $a_2 = \nabla_{(2)} b_2$, where 
\[
\norm{ b_2 }_{L^2(\R)} \leq h |B^\two(0,h)|^{-1/2} .
\] 
Hence $a_1 \ast_\two a_2$ is also a derivative:  $a_1 \ast_\two a_2 = \nabla_{(2)} (a_1 \ast_\two b_2)$.
Further, $a_1 \ast_\two b_2$ has mean $0$ on $\Hn$ because $a_1$ has mean $0$.

Hence we consider the following potential cancellation condition:
\begin{gather}\label{eq:wrong-cancellation}
a = \nabla_{\two} A \quad \text{where $\supp(A) \subseteq T$ \  and \ $
\int_{\Hn} A(g) \, dg  = 0$};
\end{gather}
since $A$ is the integral of $a$ along vertical lines, it is easy to check that
\[
|T|^{1/2} \norm{A}_{L^2(\Hn)} \lesssim h,
\]
and $A$ satisfies an appropriate size condition.

Now we consider a family of such functions $a_h$, for $h \geq 1$, given by
\[
a_h(z,t) = h^{-1} a(z,t/h), 
\]
where $a(z,t) = a_1(z) a_2(t)$, $a_1 \geq 0$, $\supp(a_1) \subseteq B^\one(o,1)$ and $\int_{\C^n} a_1(z) \,dz \neq 0$, while $a_2 \neq 0$, $\supp(a_2) \subseteq B^\two(0,1)$ and the zeroth and first moments of $a_2$ vanish.
If \eqref{eq:wrong-cancellation} were the ``right'' cancellation condition, then the $a_h$ would be uniformly bounded multiples of atoms. 
We consider the convolution $a_h \Hconv k$ of $a$ with a singular integral kernel $k$, of the form
\[
k(z,t) =\omega(z) /(|z|^4 + |t|^2)^{-(2n+3)/4},
\]
where $\omega$ is smooth, homogeneous of degree $0$ and has mean $0$ on the unit sphere in $\C^n$, so that $k$ is smooth, homogeneous of degree $-(2n+2)$ and has mean $0$ on the unit sphere in $\Hn$. 
We would expect that $\norm{ a_h \ast_{\one} k }_{L^1(\Hn)}$ is uniformly bounded in $h$.
However, this is not so.

Indeed, if $(z',t') \in \supp(a_h)$ and $|z| > 1$, then $|z-z'|$ is bounded below, and
\begin{align*}
&\int_{\R}   a(z', t') \,h \, k (z-z', ht - ht' - \tfrac{1}{2} \Im(z' \cdot \bar z)) \,dt'  \\
&\qquad=  \int_{\R}   a(z', t') \frac{ h \omega( z - z') }{ ( |z-z'|^4 +  | h(t - t') - \tfrac{1}{2} \Im(z' \cdot \bar z)|^2 )^{(2n+2)/4}} \,dt' \\
&\qquad\to   a(z', t) \int_{\R}  \frac{ \omega( z - z') }{ ( |z-z'|^4 +  |t'|^2 )^{(2n+2)/4}} \,dt' \\
&\qquad=   a(z', t)  \frac{  d_n \omega( z - z') }{  |z-z'| ^{2n} } \\
&\qquad=: \tilde k(z)\,,
\end{align*}
say, where $d_n$ is an appropriate nonzero constant $d_n$ and $\tilde k$ is a classical singular integral kernel on $\C^n$.
Hence, as $h \to \infty$,
\begin{align*}
&\iint_{B(0,1)^c \times \R}  |a_h \ast_{\one} \,k (z, t)| \,dt \,dz \\
&\qquad= \iint_{B(0,1)^c\times \R} \left| \iint_{\C^n \times \R}   a(z', t') \,h \, k (z-z', ht - ht' - \tfrac{1}{2} \Im(z' \cdot \bar z)) \,dz \,dt' \right| \,dt \,dz\\
&\qquad\to \iint_{B(0,1)^c\times \R} \left| \int_{\C^n}   a(z', t) \, \tilde k (z-z') \,dz \right| \,dt \,dz\\
&\qquad= \iint_{B(0,1)^c\times \R} \left| \int_{\C^n}   a_1(z') a_2(t) \, \tilde k (z-z') \,dz \right| \,dt \,dz\\
&\qquad= \infty.
\end{align*}

This establishes that  $\norm{ a_h \ast_{\one} k }_{L^1(\Hn)}$ is \emph{not} uniformly bounded in $h$, and so our putative cancellation condition does not work.

However, writing $a$ as $a^\one \Hconv a^\two$, where $a^\one$ and $a^\two$ have mean $0$ and are supported in $B^\one(g,r)$ and $B^\two(0,h)$ does work, 
\emph{provided that} $a^\one = \nabla_\one \cdot A^\one$ and $a^\two = \nabla_\two A^\two$, where
\[
\norm{ A^\one }_{L^2(\Hn)} \lesssim r \norm{ a^\one }_{L^2(\Hn)} 
\qquad\text{and}\qquad
\norm{ A^\two }_{L^2(\R)} \lesssim h \norm{ a^\two }_{L^2(\R)}  ;
\] 
\emph{if this is so}, then
\begin{equation}\label{eq:def-cancellation}
a = \nabla_{(1)}\cdot{} \nabla_{(2)} A,  
\end{equation}
where $\supp(A) \subseteq T$, and precursor estimates hold:
\begin{equation}\label{eq:def-precursor-est}
\norm{ (\nabla_{(1)}\cdot{})^i \nabla_{(2)}^j A }_{L^2(\Hn)} 
\lesssim  r^{1-i} (r^2+h)^{1-j} \norm{a}_{L^2(\Hn)} ,
\end{equation}
when $i, j \in\{0,1\}$.
We consider $a \ast_{\one} k$, where $k$ is a flag singular integral  kernel,  given by
\begin{equation}\label{eq:flag-kernel}
k = \iint_{\R^+ \times \R^+} \varphi^\one_s \Vconv \varphi^\two_t \,\frac{ds}{s} \,\frac{dt}{t} \,,
\end{equation}
where $\varphi^\one_s$ and $\varphi^\two_t$ are normalised dilates of $\varphi^\one$ and $\varphi^\two$ respectively, and $\varphi^\one \in C^\infty(\Hn)$ and $\varphi^\two\in C^\infty(\R)$, $\supp(\varphi^\one) \subseteq B^\one(o,1)$ and $\supp(\varphi^\two) \subseteq B^\two(0,1)$, and $\varphi^\one$ and $\varphi^\two$ have mean $0$.

If $R$ and $S$ are dyadic rectangles and $R \subseteq S$, then we define
\begin{equation}\label{eq:def-rho}
\begin{aligned}
\rho(R,S)
:=  \frac{\wid(R)}{ \wid(S)} 
+  \frac{\heit(R)}{ \heit(S) } \,.
\end{aligned}
\end{equation}

\begin{lem}\label{thm:singular-integrals-bounded-on-atoms}
Suppose that singular integral kernel $k$ is  as in \eqref{eq:flag-kernel} above.
Then for each particle $a$ associated to a tube $T$, and for each tube $S \supset 2T$,
\[
\int_{S^c} | a \Hconv k (g) | \,dg 
\lesssim_{\boldsymbol\varphi} \rho(R,S) |R|^{1/2} \norm{a_T}_{L^2(G)} .
\]
\end{lem}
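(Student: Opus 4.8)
The plan is to reduce everything to the cancellation condition \eqref{eq:def-cancellation} together with the precursor estimates \eqref{eq:def-precursor-est}, and then to decompose the flag kernel $k$ into a ``fine'' and a ``coarse'' part \emph{in each of its two parameters}, tuned to the dimensions of $T$. (Here I write $\rho(T,S):=\wid(T)/\wid(S)+\heit(T)/\heit(S)$, in the spirit of \eqref{eq:def-rho}.) Since the double integral in \eqref{eq:flag-kernel} factors, I would first write $k = k^\one \Vconv k^\two$, where $k^\one := \int_0^\infty \varphi^\one_s\,\tfrac{ds}{s}$ is a Calder\'on--Zygmund kernel on $\Hn$ and $k^\two := \int_0^\infty \varphi^\two_t\,\tfrac{dt}{t}$ one on $\R$, so that $a \Hconv k = (a \Hconv k^\one)\Vconv k^\two$. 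Cutting the scale integrals at $s=\wid(T)$ and $t=\heit(T)$ gives $k^\one = k^\one_{<} + k^\one_{>}$ and $k^\two = k^\two_{<} + k^\two_{>}$: the fine parts $k^\one_{<},k^\two_{<}$ are supported in $B^\one(o,\wid(T))$ and $B^\two(0,\heit(T))$ respectively and convolution by each is bounded on $L^2$ uniformly, while the coarse parts $k^\one_{>},k^\two_{>}$ are smooth, have mean $0$, and obey the standard size and regularity bounds of a Calder\'on--Zygmund kernel at scales beyond $\wid(T)$ and $\heit(T)$. Multiplying out produces four pieces $(a \Hconv k^\one_{\bullet})\Vconv k^\two_{\bullet}$, which I would treat separately.

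The fine--fine piece $(a \Hconv k^\one_{<})\Vconv k^\two_{<}$ is supported in a fixed dilate $c\,T$ of $2T$, since each convolution enlarges $\supp a \subseteq 2T$ only by a bounded factor; by $L^2$-boundedness of the two truncated operators and the Cauchy--Schwarz inequality its $L^1$ norm is $\lesssim |T|^{1/2}\norm{a}_{L^2(\Hn)}$. If $S \supseteq c\,T$ this piece contributes nothing on $S^c$, and otherwise $\wid(S)\asymp\wid(T)$ and $\heit(S)\asymp\heit(T)$, so $\rho(T,S)\asymp 1$ and the required bound holds.

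For each of the three remaining pieces I would invoke the cancellation $a = \nabla_\one\!\cdot\nabla_\two A$ of \eqref{eq:def-cancellation}. Where a coarse kernel occurs in the first parameter I transfer one horizontal derivative off $a$ onto $k^\one_{>}$, replacing it by $\rinabla_\one k^\one_{>}$ (only the horizontal transfer requires care, as $\nabla_\two=\oper{T}$ is bi-invariant and passes through $\Hconv$ freely); where a coarse kernel occurs in the second parameter I move $\nabla_\two$ onto $k^\two_{>}$. Differentiation lowers the homogeneity, so $\rinabla_\one k^\one_{>}$ and $\nabla_\two k^\two_{>}$ are integrable away from the origin, with $\int_{\{\text{horizontal size}>\lambda\}}|\rinabla_\one k^\one_{>}| \lesssim \wid(T)/\lambda$ and $\int_{\{|v|>\mu\}}|\nabla_\two k^\two_{>}| \lesssim \heit(T)/\mu$. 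The leftover factor of $a$ --- namely $A$, or $\nabla_\one\!\cdot A$, or $\nabla_\two A$, according to the piece --- is estimated in $L^1$ by $\norm{\cdot}_{L^1}\le|T|^{1/2}\norm{\cdot}_{L^2}$ and the matching instance of \eqref{eq:def-precursor-est}, which supplies exactly one factor $\wid(T)$ for each missing horizontal derivative and one factor $\heit(T)$ for the missing vertical one. Because $2T \subseteq S$, on $S^c$ the relevant argument of each coarse kernel in the defining convolution has horizontal size $\gtrsim \wid(S)$ (for the $k^\one_{>}$ factor) or satisfies $|v| \gtrsim \heit(S)$ (for the $k^\two_{>}$ factor); feeding this into the two displayed decay integrals and using Young's inequality --- and, in the two mixed pieces, the $L^2$-boundedness of the fine truncated operator --- yields a total gain of $\wid(T)/\wid(S)$ or $\heit(T)/\heit(S)$, hence $\lesssim \rho(T,S)\,|T|^{1/2}\norm{a}_{L^2(\Hn)}$.

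I expect the main obstacle to lie in the two \emph{mixed} pieces (coarse in one parameter, fine in the other): there the fine factor localizes the support only in one parameter, so a naive passage from $L^2$ to $L^1$ over the now-unbounded support of a differentiated coarse kernel loses a power and gives only $(\heit(T)/\heit(S))^{1/2}$ in place of the first power. To recover the sharp power one must keep the derivative in the coarse parameter on the kernel, use the matching precursor estimate from \eqref{eq:def-precursor-est} for the leftover factor of $a$ on a set of measure $\lesssim|T|$, and, for the remaining parameter, either exploit the mean $0$ of the fine kernel against a first-order Taylor expansion of the smooth coarse piece or invoke its differentiated pointwise decay directly. A related subtlety, and the reason the estimate is not entirely elementary, is that the complement $S^c$ of a tube meets the singular support of $a\Hconv k$ --- the slab $2T\cdot G_2$, which is unbounded in the central direction --- so integrability there is not automatic; it is precisely the vertical cancellation $\nabla_\two$ in \eqref{eq:def-cancellation} that makes $a\Hconv k$ integrable along that slab, with \eqref{eq:def-precursor-est} making the bound quantitative.
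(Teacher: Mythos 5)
Your proposal matches the paper's proof essentially step by step: the same four-way decomposition of the scale integral obtained by cutting at $r=\wid(T)$ and $s=\heit(T)$, the same transfer of $\nabla_\one$ and $\nabla_\two$ from $a$ onto the coarse kernel factors (producing $\rinabla_\one\varphi^\one$ and $\nabla_\two\varphi^\two$), and the same use of the precursor estimates \eqref{eq:def-precursor-est} together with Young's inequality to produce the gains $\wid(T)/\wid(S)$ and $\heit(T)/\heit(S)$. Your worry about a square-root loss in the mixed pieces is overcautious, but the fix you flag is precisely what the paper does: pass from $L^2$ to $L^1$ while the leftover piece of $a$ (namely $\nabla_\one\cdot A$ or $\nabla_\two A$) still has support of measure $\lesssim|T|$, and only then apply Young in the coarse parameter, so the sharp power comes out directly.
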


\begin{proof}
We suppose that \eqref{eq:def-cancellation} and \eqref{eq:def-precursor-est} hold and that $A$ is supported in $T$. 
By translation and dilation invariance, we may and shall suppose that $\cent(T) = o$, $\wid(T) =1$ and $\heit(T) = h > 1$; we may and shall also suppose that $\cent(S) = o$, $\wid(S) =w^* > 1$ and $\heit(S) = h^* > h$.

We partition $\R^+ \times \R^+$ into four regions $\Omega^j$, given  by
\[
\begin{aligned}
\Omega_1 &= \{ (r,s) \in \R^+ \times \R^+ : r \leq 1, s \leq h \} \qquad&
\Omega_2 &= \{ (r,s) \in \R^+ \times \R^+ : r \leq 1, s > h \} \\
\Omega_3 &= \{ (r,s) \in \R^+ \times \R^+ : r > 1, s \leq h\} &
\Omega_4 &= \{ (r,s) \in \R^+ \times \R^+ : r > 1, s > h\},
\end{aligned}
\]
and we write $a \Hconv k = a \Hconv k^1 + a \Hconv k^2 + a \Hconv k^2 + a \Hconv k^4$, where 
\[
k^i := \iint_{\Omega^i} \varphi^\one_s \Vconv \varphi^\two_t \,\frac{ds}{s} \,\frac{dt}{t} \,.
\]
We treat the four summands $a \Hconv k^i$ separately.
The key to our estimation is the fact that
\begin{equation}\label{eq:sing-int--support}
\begin{aligned}
&\supp \bigl( a \Hconv \varphi_{r,s}\bigr)
\subseteq
T(o,1, h)  T(o, r, s) = T(o, r+1, s+h ) .
\end{aligned}
\end{equation}

First, by definition, $\supp (a \Hconv k^1) \subseteq 2S$, and there is nothing to consider; in any case, $a \Hconv k^1 \in L^1(G)$.

Second, to treat $a \Hconv k^2$, we observe that if $a \Hconv \varphi^\one_s \Vconv \varphi^\two_t(g)  \neq 0$, where $g \in S^c$ and $(s,t) \in \Omega^2$, then $h^* \leq  h + s + 1 \leq h+2$, and so
\[
\begin{aligned}
\int_{S^c} | a \Hconv k^2 (g)| \,d  g 
&= \int_{S^c} \left| \int_{h^*/2}^{\infty} \int_{0}^{1} a \Hconv \varphi^\one_s \Vconv \varphi^\two_t  \,\frac{ds}{s} \,\frac{dt}{t} \right| \,d  g .
\end{aligned}
\]

Recall that $a = \nabla_{(2)} (\nabla_{\one} \cdot A)$, and write
\[
\begin{aligned}
a \Hconv k^2
&=  \int_{h^*/2}^{\infty} \int_{0}^{1} (\nabla_{\two} (\nabla_{\one} \cdot A)) \Hconv \varphi^\one_s \Vconv \varphi^\two_t \,\frac{ds}{s} \,\frac{dt}{t} \\
&=  \int_{h^*/2}^{\infty} \left( \int_{0}^{1} (\nabla_{\one} \cdot A) \Hconv \varphi^\one_s \,\frac{ds}{s}  \right) \Vconv (\nabla_{\two}\varphi^\two)_t \,\frac{dt}{t^2} \\
&=  \int_{h^*/2}^{\infty} c^2 \Vconv (\nabla_{\two}\varphi^\two)_t \,\frac{dt}{t^2} \,, 
\end{aligned}
\]
say, where 
\[
c^2 = \int_0^1  (\nabla_{\one} \cdot A) \Hconv \varphi^\one_s \,\frac{ds}{s} \,.
\]
Now $\nabla_{\one} \cdot A$ is supported in $T$, so $c^2$ is supported in $T^*$, say, where $|T^*| \eqsim |T|$.
Moreover,   by H\"older's inequality and one-parameter singular integral estimates (see, e.g., \cite{CCLLO24}),
\[
\norm{ c^2 }_{L^1(\Hn)} 
\lesssim |T|^{1/2}  \norm{ c^2 }_{L^2(\Hn)} 
\lesssim_{\varphi^\one} |T|^{1/2} \norm{  \nabla_{\one} \cdot A }_{L^2(\Hn)} 
\lesssim h |T|^{1/2}  \norm{ a }_{L^2(\Hn)} .
\]
We conclude that
\[
\begin{aligned}
\int_{S^c} | a \Hconv k^2 (g)| \,d  g 
&= \int_{S^c} \left| \int_{h^*/2}^{\infty}  c^2 \Vconv (\nabla_{\two}\varphi^\two)_t(g)  \,\frac{dt}{t^2} \right| \,d  g \\
&\leq \int_{h^*/2}^{\infty}  \int_{\Hn } |  c^2 \Vconv (\nabla_{\two}\varphi^\two)_t (g)| \,d  g \,\frac{dt}{t^2}  \\
&\leq \int_{h^*/2}^{\infty}  \norm{ c^2 }_{L^1(\Hn)} \norm{ (\nabla_{\two}\varphi^\two)_t  }_{L^1(\R)} \,\frac{dt}{t^2}  \\
& \eqsim_{\varphi^\two} \frac{1}{h^*}  \norm{ c^2 }_{L^1(\Hn)}  \\
& \lesssim_{\varphi^\one} \frac{h}{h^*}  |R|^{1/2} \norm{ a }_{L^2(\Hn)}  .
\end{aligned}
\]

Third, to treat $a \Hconv k^3$, we argue similarly.
If $a \Hconv \varphi^\one_s \Vconv \varphi^\two_t(g)  \neq 0$, where $g \in S^c$ and $(s,t) \in \Omega^3$, then $w^* \leq s+1$, whence
\[
\begin{aligned}
\int_{S^c} | a \Hconv k^3 (g)| \,d  g 
&= \int_{S^c} \left| \int_{w^*/2}^{\infty} \int_{h}^{\infty} a \Hconv \varphi^\one_s \Vconv \varphi^\two_t  (g) \,\frac{dt}{t} \,\frac{ds}{s} \right| \,d  g .
\end{aligned}
\]
Now we write
\[
\begin{aligned}
a \Hconv k^3
&=  \int_{w^*/2}^{\infty} \int_{0}^{h} (\nabla_{\one} \cdot (\nabla_{\two} A)) \Vconv \varphi^\two_t \Hconv \varphi^\one_s \,\frac{dt}{t} \,\frac{ds}{s} \\
&=  \int_{w^*/2}^{\infty} c^3 \Hconv (\rinabla_{\one}\varphi^\one)_s \,\frac{ds}{s^2} \,, 
\end{aligned}
\]
say, where 
\[
c^3 = \int_0^h (\nabla_{\two} A) \Vconv \varphi^\two_t \,\frac{dt}{t} \,.
\]
Hence 
\[
\begin{aligned}
\int_{S^c} | a \Hconv k^3 (g)| \,d  g 
&= \int_{S^c} \left| \int_{w^*/2}^{\infty}  c^3 \Hconv (\rinabla_{\one}\varphi^\one)_s(g)  \,\frac{ds}{s^2} \right| \,d  g \\
&\leq \int_{w^*/2}^{\infty}  \int_{\Hn } |  c^3 \Hconv (\rinabla_{\one}\varphi^\one)_s(g) | \,d  g \,\frac{ds}{s^2}  \\
& \lesssim_{\varphi^\one} \frac{1}{w^*}  \norm{ c^3 }_{L^1(\Hn)}  \\
& \lesssim_{\varphi^\two} \frac{1}{w^*}  |R|^{1/2} \norm{ a }_{L^2(\Hn)}  .
\end{aligned}
\]

Last, we treat $a \Hconv k^4$.
If $a \Hconv \varphi^\one_s \Vconv \varphi^\two_t(g)  \neq 0$, where $g \in S^c$ and $(s,t) \in \Omega^4$, then either $s+1 \geq  w^*$, or $s+1 < w^*$ and $t + s^2 + h \geq h^*$, whence
\[
\begin{aligned}
\int_{S^c} | a \Hconv k^4 (g)| \,d  g 
&\leq \int_{S^c} \int_{w^*/2}^{\infty} \int_{h}^{\infty} |  a \Hconv \varphi^\one_s \Vconv \varphi^\two_t(g) |  \,\frac{dt}{t} \,\frac{ds}{s} \,d  g \\
&\qquad+  \int_{S^c} \int_{1}^ {w^*} \int_{h^*}^{\infty} | a \Hconv \varphi^\one_s \Vconv \varphi^\two_t(g) |   \,\frac{dt}{t} \,\frac{ds}{s} \,d  g .
\end{aligned}
\]
Next, 
\[
\begin{aligned}
a \Hconv \varphi^\one_s \Vconv \varphi^\two_t 
&=  \frac{1}{s t} A  \Hconv  (\rinabla_{\one}\varphi^\one)_s \Vconv (\nabla_{\two}\varphi^\two)_t \,.
\end{aligned}
\]
By estimating much as before, we see that
\[
\begin{aligned}
&\int_{S^c} \int_{w^*/2}^{\infty} \int_{h}^{\infty} | a \Hconv \varphi^\one_s \Vconv \varphi^\two_t(g) |  \,\frac{dt}{t} \,\frac{ds}{s} \,d  g \\
&\qquad\leq \int_{w^*/2}^{\infty} \int_{h}^{\infty} \int_{\Hn} | A \Hconv (\rinabla_{\one}\varphi^\one)_s \Vconv (\nabla_{\two}\varphi^\two)_t (g) |  \,d  g \,\frac{dt}{t^2} \,\frac{ds}{s^2}   \\
&\qquad\leq \int_{w^*/2}^{\infty} \int_{h}^{\infty} \norm{ A }_{L^1(\Hn)} \norm{ (\rinabla_{\one}\varphi^\one)_s }_{L^1(\Hn)} \norm{ (\nabla_{\two}\varphi^\two)_t }_{L^1(\R)}  \,\frac{dt}{t^2} \,\frac{ds}{s^2}  \\
&\qquad \lesssim_{\varphi^\one, \varphi^{\two}} \frac{1}{h w^*}  \norm{ A }_{L^1(\Hn)}  \\
&\qquad \lesssim \frac{1}{w^*}  |R|^{1/2} \norm{ a }_{L^2(\Hn)}  ,
\end{aligned}
\]
while
\[
\begin{aligned}
&\int_{S^c} \int_{1}^{w^*} \int_{h^*/2}^{\infty}  | a \Hconv \varphi^\one_s \Vconv \varphi^\two_t (g) | \,\frac{dt}{t} \,\frac{ds}{s} \,d  g \\
&\qquad\leq \int_{1}^{\infty} \int_{h^*/2}^{\infty} \int_{\Hn} | A \Hconv (\nabla_{\one}\varphi^\one)_s \Vconv (\nabla_{\two}\varphi^\two)_t (g) |  \,d  g  \,\frac{dt}{t^2} \,\frac{ds}{s^2} \\
&\qquad\leq \int_{1}^{\infty} \int_{h^*/2}^{\infty} \norm{ A }_{L^1(\Hn)} \norm{ (\nabla_{\one}\varphi^\one)_s }_{L^1(\Hn)} \norm{ (\nabla_{\two}\varphi^\two)_t }_{L^1(\R)}  \,\frac{dt}{t^2} \,\frac{ds}{s^2}  \\
&\qquad \lesssim_{\varphi^\one,\varphi^\two} \frac{1}{h^*}  \norm{ A }_{L^1(\Hn)}  \\
&\qquad \lesssim \frac{h}{h^*}  |R|^{1/2} \norm{ a }_{L^2(\Hn)}  .
\end{aligned}
\]
In conclusion, 
\[
\int_{(S^*)^c} | a \Hconv k (g) | \,dg 
\lesssim_{\boldsymbol\varphi} \rho(R,S) |R|^{1/2} \norm{a}_{L^2(G)},
\]
as required.
\end{proof}

This lemma, coupled with Journé's lemma, then shows that flag singular integral operators are bounded on the flag Hardy space defined using particles satisfying \eqref{eq:def-cancellation} and \eqref{eq:def-precursor-est}.
In particular, the flag Riesz transformations map this space into $L^1(\Hn)$ and so the flag Hardy space defined using particles satisfying \eqref{eq:def-cancellation} and \eqref{eq:def-precursor-est} is contained in the flag Hardy space defined in \cite{CCLLO24}.
Conversely, the flag Hardy space defined in \cite{CCLLO24} is trivially contained in the flag Hardy space defined using particles satisfying \eqref{eq:def-cancellation} and \eqref{eq:def-precursor-est}, and so these Hardy spaces coincide.

The argument of this proof relies on the precursor estimates \eqref{eq:def-precursor-est}, which follow from assumptions about $a^\one$ and $a^\two$.
The estimate for $a^\two$ is straightforward, but that for $a^\one$ is nontrivial.

On $\R^n$, the Poincaré lemma shows that if $0 \leq k < n$ and a smooth $k$-form $\omega$ is supported in $B(x,r)$ and $d\omega = 0$, where $d$ is the exterior derivative, then we may write $\omega = d\Omega$, where $\Omega$ is a smooth $(k-1)$-form supported in $B(x,r)$; further, if a function $a$ has mean $0$ and $\supp(a) \subseteq B(x,r)$, then we may write the $n$-form $a \,dx_1 \wedge \dots \wedge dx_n$  as $d A$, where the $(n-1)$-form $A$ is supported in $B(x,r)$.   
The forms $\Omega$ and $A$ satisfies $L^2$ precursor estimates \cite{MMM08}.

The argument to treat $n$-forms has to be slightly different to that used to treat forms of lower degrees because the condition imposed on $a$ is different.
The second result implies that a function $a$ of mean $0$ is the divergence of a vector field of controlled size.

On $\Hn$, the Rumin complex replaces the de Rham complex.  
Baldi, Franchi and Pansu \cite{BFP22} found precursor estimates for the corresponding problem on stratified Lie groups, except for the forms of maximum degree.
A very recent paper of Baldi, Franchi and Pansu \cite{BFP24} extends these estimates to forms of maximum degree, and  to general stratified groups.
Hence \eqref{eq:def-precursor-est} holds on the Heisenberg group.

In \cite{CCLLO24}, we formulated the theory in terms of particles $a$ with support in a tube $T$ that satisfy 
\begin{equation}\label{eq:weak-cancellation}
a = \Delta_{(1)} \Delta_{(2)} A, 
\end{equation}
where  $\supp(A) \subseteq T$.
For such a particle to be an atom, we add the standard size condition:
\[
|T|^{1/2}  \norm{ a_T }_{L^2(\Hn)} \leq 1 .
\]
Precursor estimates similar to \eqref{eq:def-precursor-est} are very easily proved---see \cite{CCLLO24}.
But the cancellation condition \eqref{eq:weak-cancellation} actually corresponds to requiring that some first moments also vanish, and is stronger than necessary.

It is curious that, for functions $A$ supported in a tube $T(g,r,h)$, estimates of the form 
\[
\norm{ A }_{L^2(\Hn)} \lesssim r^{-2} \norm{ a }_{L^2(\Hn)} 
\]
hold when $a = \oper{L}_{\one} A$, but estimates of the form 
\[
\norm{ A }_{L^2(\Hn)} \lesssim r^{-1} \norm{ a }_{L^2(\Hn)} 
\]
do not hold when $a = \nabla_{\one} \cdot A$. 
If they did, then \eqref{eq:wrong-cancellation} would be a good cancellation condition for particles.

\section{Conclusion}
Hardy spaces have been applied to establish delicate estimates in PDE.
This little paper shows that an understanding of PDE (of a more geometric flavour) is needed to understand Hardy spaces.

\end{document}